\documentclass[12pt, a4paper]{article}

\usepackage[utf8]{inputenc}
\usepackage[T1]{fontenc}
\usepackage{amsmath, amsthm, amssymb, amsfonts}
\usepackage{geometry}
\usepackage{hyperref}
\usepackage{listings}
\usepackage{xcolor}
\usepackage{graphicx}
\usepackage{bbm}
\newtheorem{lemma}{Lemma}
\newtheorem{theorem}{Theorem}

\newcommand{\stirlingii}[2]{\genfrac{\{}{\}}{0pt}{}{#1}{#2}}

\title{\textbf{Extended Central Factorial Numbers and the Flickering Operator}}
\author{Andrii Husiev}
\date{\today}

\begin{document}

\maketitle

\begin{abstract}
This paper introduces a class of extended central factorial numbers generated by a parity-dependent recurrence relation, termed the "flickering operator". We demonstrate that the resulting triangular structure, now indexed as OEIS A395021, provides a unified recursive framework for alternating bit sequences (A000975) and normalized tangent-secant coefficients (A036969). This study provides an alternative integer-based expansion for power sums. While similar to the central factorial methods explored by Knuth \cite{knuth1993}, our flickering basis offers an integrated computational scheme that avoids fractional Bernoulli numbers by construction. We provide explicit closed-form expressions, discuss its geometric derivation from finite difference tables, and present a full Python implementation.

Structural Synthesis. A key contribution of this work is the unification of previously disparate combinatorial sequences into a single coherent framework. While certain columns of the flickering triangle $T(n,k)$ (such as A008957) could be partially retrieved from the diagonals of existing central factorial arrays, our structure provides a complete representation including previously unindexed even-positioned terms. Furthermore, the row-wise analysis reveals that the flickering operator generates full integer sequences where previously only the odd-indexed elements (e.g., A002451) were identified. This synthesis bridges the gap between these sequences, positioning A395021 as the underlying master structure.

\end{abstract}

\section{Introduction}

The calculus of finite differences and the theory of Stirling numbers of the second kind $\stirlingii{n}{k}$ have long served as the cornerstone of discrete mathematics and combinatorial enumeration \cite{stirling, comtet}. Classical partition theory and power sequence expansions heavily rely on these numbers to transition between ordinary powers and falling factorials.

A natural extension of this framework is the study of central factorial numbers, which arise in the expansion of polynomials in shifted bases and are intimately connected to the derivatives of trigonometric functions. The operational methods and combinatorial identities involving shifted finite differences were systematically organized by Riordan \cite{riordan} and Carlitz \cite{carlitz}, who formalized the structural properties and arithmetic congruences of central factorial numbers. Furthermore, Knuth \cite{knuth1993} demonstrated the profound utility of central factorial numbers in providing elegant closed-form solutions to Faulhaber’s formula for sums of powers.

Despite these advancements, the classical transition to summing powers typically necessitates the introduction of fractional coefficients, specifically the Bernoulli numbers. Moreover, as cataloged extensively in the On-Line Encyclopedia of Integer Sequences (OEIS) \cite{sloane}, many structural sequences related to central factorials (such as A008957 and A036969) often present isolated or fragmented combinatorial profiles.

To bridge the gap between discrete partitions, trigonometric series expansions, and strictly integer-valued power sums, we modify the classical recurrence. By accounting for the parity of the step, we define the "flickering operator". This operator acts as a symmetry-breaking filter on the power sequence $m^n$, yielding the unified triangular structure \textbf{A395021}. 

The name \textit{'Extended central factorial numbers of the second kind'} is proposed for this triangular structure, as it generalizes the classical central factorial numbers via a parity-dependent recurrence. As we will demonstrate, this approach not only unifies several previously disparate sequences but also offers a completely integer-based expansion basis for Faulhaber's formula, avoiding fractional terms by construction.

\textbf{Definition.} We define the triangular structure $T(n, k)$ as the \textit{normalized flickering central finite differences}. For a fixed $n \ge 1$ and $1 \le k \le n$, the terms are given by:
\begin{equation}
T(n, k) = \frac{1}{k!} \delta^k f_i(0) \quad \text{where} \quad f(j) = j^n
\end{equation}
The selection index $i$ follows the \textbf{parity-dependent rule}:
\begin{equation}
i = 
\begin{cases} 
1/2, & \text{if } k \text{ is odd} \\
0, & \text{if } k \text{ is even}
\end{cases}
\end{equation}
This alternating selection ensures that all resulting coefficients are integers, effectively interleaving the structural properties of central factorial numbers and their scaling transitions.

\section{The Flickering Recurrence and Array A394582}

\subsection{Numerical Representation}
The initial values of the array demonstrate the interleaved scaling behavior (where $n$ is the row index and $k$ is the column index):
\begin{table}[ht]
\centering
\renewcommand{\arraystretch}{1.2}
\begin{tabular}{c|cccccccc}
$n \setminus k$ & 1 & 2 & 3 & 4 & 5 & 6 & 7 & 8 \\ \hline
1 & 1 & 1 & 1 & 1 & 1 & 1 & 1 & 1 \\
2 & 1 & 2 & 5 & 10 & 21 & 42 & 85 & 170 \\
3 & 1 & 3 & 14 & 42 & 147 & 441 & 1408 & 4224 \\
4 & 1 & 4 & 30 & 120 & 627 & 2508 & 11440 & 45760 \\
5 & 1 & 5 & 55 & 275 & 2002 & 10010 & 61490 & 307450 \\
\end{tabular}
\caption{The array $Todd(n,k)$ as indexed in OEIS A394582.}
\end{table}

\section{Explicit Formulas and Identities}

\subsection{Central Finite Difference Representation for A394582}

In this section, we analyze the specific structure of the array \textbf{OEIS A394582}. This array is constructed as a sub-grid of the master flickering central factorial triangle (A395021), formed by extracting only its odd-indexed columns. Consequently, the row index $n$ in \textbf{A394582} corresponds to the order of differences $2n-1$ in the master difference table.

The terms of this sub-array can be expressed as the $(2n-1)$-th finite difference of the power sequence $f(j) = j^m$, where the exponent is $m = 2n + k - 2$. The difference is evaluated at the central offset $j = -(n-1)$ and normalized by $(2n-1)!$:
\begin{equation}
Todd(n, k) = \frac{1}{(2n-1)!} \sum_{i=0}^{2n-1} (-1)^{2n-1-i} \binom{2n-1}{i} (i-n+1)^{2n+k-2}
\end{equation}
This identity positions the array \textbf{A394582} as a vertical slice of the master structure, explaining its specific growth properties compared to the unified triangle.

\subsection{Recurrence Relation}
The array $Todd(n,k)$ is defined for $n \ge 1, k \ge 1$ with boundary conditions $Todd(n,1) = 1$ and $Todd(1,k) = 1$. The flickering operator defines the transition for $n, k > 1$:
\begin{equation}
Todd(n, k) = 
\begin{cases} 
n \cdot Todd(n, k-1) + Todd(n-1, k), & \text{if } k \equiv 1 \pmod{2} \\
n \cdot Todd(n, k-1), & \text{if } k \equiv 0 \pmod{2}
\end{cases}
\label{eq:flickering_rule}
\end{equation}

\subsection{Representation via Stirling Numbers}
Alternatively, the array can be represented using Stirling numbers of the second kind. This form highlights the combinatorial partitions underlying the flickering operator:
\begin{equation}
Todd(n, k) = \sum_{j=0}^{2n+k-2} \binom{2n+k-2}{j} (1-n)^{2n+k-2-j} \stirlingii{j}{2n-1}
\end{equation}

\section{Row-wise Analysis (\texorpdfstring{$n = \text{const}$})}
The rows of array $Todd(n, k)$ represent the evolution of the flickering operator for fixed scaling factors $n$.
\begin{itemize}
    \item \textbf{Row $n=1$:} $(1, 1, 1, \dots)$ correspond to constant sequence \textbf{A000012}.
    \item \textbf{Row $n=2$:} $(1, 2, 5, 10, 21, 42, \dots)$ correspond to Lichtenberg sequence \textbf{A000975}, also odd elements correspond to \textbf{A002450}.
    \item \textbf{Row $n=3$:} $(1, 3, 14, 42, 147, \dots)$ correspond to \textbf{A394882}, identified in this study via the central difference table, also odd elements correspond to \textbf{A002451}.   
    \item \textbf{Row $n=4$:} $(1, 4, 30, 120, 627, \dots)$ currently not indexed in OEIS, but odd elements correspond to \textbf{A383838}.   
    \item \textbf{Row $n=5$:} $(1, 5, 55, 275, 2002, \dots)$ currently not indexed in OEIS, but odd elements correspond to \textbf{A383840}.
\end{itemize}

\subsection{Generating Functions for Row Sequences}

The structural regularity of the flickering operator is further evidenced by the closed-form generating functions for the rows of array \textbf{A394582}. We distinguish between the sequences formed by odd-indexed elements and the complete row sequences.

\subsubsection{Odd-indexed Subsequences}
When considering only the odd-indexed elements $k$ (corresponding to the sequences A002450, A002451, A383838, and A383840), the generating functions $G_n^{odd}(x)$ follow a elegant reciprocal structure of square-indexed factors:

\begin{itemize}
    \item \textbf{Row $n=2$ (A002450):} $G_2^{odd}(x) = \frac{x}{(1 - x)(1 - 4x)}$
    \item \textbf{Row $n=3$ (A002451):} $G_3^{odd}(x) = \frac{x}{(1 - x)(1 - 4x)(1 - 9x)}$
    \item \textbf{Row $n=4$ (A383838):} $G_4^{odd}(x) = \frac{x}{(1 - x)(1 - 4x)(1 - 9x)(1 - 16x)}$
    \item \textbf{Row $n=5$ (A383840):} $G_5^{odd}(x) = \frac{x}{(1 - x)(1 - 4x)(1 - 9x)(1 - 16x)(1 - 25x)}$
\end{itemize}

In general, for the $n$-th row subsequence, the generating function is given by:
\begin{equation}
G_n^{odd}(x) = \frac{x}{\prod_{j=1}^{n} (1 - j^2 x)}
\end{equation}

\subsubsection{Full Row Sequences of A394582}
The complete rows of array \textbf{A394582}, including both even and odd terms, exhibit the "flickering" effect through the introduction of $x^2$ in the denominator and a linear numerator $(1 + nx)$. This reflects the interleaved nature of the scaling transitions:

\begin{itemize}
    \item \textbf{Row $n=2$:} $G_2(x) = \frac{x(1 + 2x)}{(1 - x^2)(1 - 4x^2)}$
    \item \textbf{Row $n=3$:} $G_3(x) = \frac{x(1 + 3x)}{(1 - x^2)(1 - 4x^2)(1 - 9x^2)}$
    \item \textbf{Row $n=4$:} $G_4(x) = \frac{x(1 + 4x)}{(1 - x^2)(1 - 4x^2)(1 - 9x^2)(1 - 16x^2)}$
    \item \textbf{Row $n=5$:} $G_5(x) = \frac{x(1 + 5x)}{(1 - x^2)(1 - 4x^2)(1 - 9x^2)(1 - 16x^2)(1 - 25x^2)}$
\end{itemize}

The general form for the generating function of the $n$-th row of A394582 is:
\begin{equation}
G_n(x) = \frac{x(1 + nx)}{\prod_{j=1}^{n} (1 - j^2 x^2)}
\end{equation}

This representation demonstrates that the flickering operator acts as a symmetry-breaking filter on the classical central factorial space, where the factor $(1 + nx)$ compensates for the parity-dependent scaling of the finite differences.

\section{Column-wise Analysis \texorpdfstring{($k = \text{const}$)}{(k = const)}}

The columns of the array $Todd(n,k)$ demonstrate structural consistency through their ties to classical sequences.
\begin{itemize}
    \item \textbf{Column $k=1$:} $(1, 1, 1, 1, 1, \dots)$ correspond to \textbf{A000012}.
    \item \textbf{Column $k=2$:} $(1, 2, 3, 4, 5, \dots)$ correspond to natural numbers \textbf{A000027}.
    \item \textbf{Column $k=3$:} $(1, 5, 14, 30, 55, \dots)$ correspond to \textbf{Square Pyramidal Numbers} (\textbf{A000330}).
    \item \textbf{Column $k=4$:} $(1, 10, 42, 120, 275, \dots)$ correspond to \textbf{Kekul\'{e} numbers for certain benzenoids} (\textbf{A108678}).
    \item \textbf{Column $k=5$:} $(1, 21, 147, 627, 2002, \dots)$ correspond to \textbf{A060493}.
  \item \textbf{Column $k=6$:} $(1, 42, 441, 2508, 10010, \dots)$ — correspond to \textbf{A394883}, identified in this study via the central difference table.
    \item \textbf{Column $k=7$:} $(1, 85, 1408, 11440, 61490, \dots)$ correspond to \textbf{A351105}.
    \item \textbf{Column $k=8$:} $(1, 170, 4224, 45760, 307450, \dots)$ — correspond to \textbf{A394896}, identified in this study via the central difference table.
    \item \textbf{Column $k=9$:} $(1, 341, 13013, 196053, 1733303, \dots)$ correspond to \textbf{A353021}.
\end{itemize}

\subsection{Polynomial Representation of the Array Columns}

The columns of the sub-array \textbf{A394582} admit a strict closed-form polynomial representation in $n$. For any odd column index $k = 2m+1$ (where $m \ge 0$), the sequence values $Todd(n, k)$ factorize into a product of a standard base polynomial $T_m(n)$ and a non-trivial rational function $U_m(n)$:
\begin{equation}
Todd(n, 2m+1) = T_m(n) \cdot U_m(n)
\end{equation}

The standard base polynomial $T_m(n)$ encompasses the predictable roots of the discrete integration and is explicitly given by the product:
\begin{equation}
T_m(n) = \left( \prod_{i=0}^{m} (n+i) \right) \left( \prod_{j=1}^{m} (2n+2j-1) \right)
\end{equation}

The non-trivial component $U_m(n) = \frac{P_m(n)}{D_m}$ consists of a polynomial $P_m(n)$ and a constant denominator $D_m$. The denominator $D_m$ follows a growth pattern governed by powers of 6 and factorials, serving as the normalization constant for the repeated summation of squares.

Due to the fundamental column transition rule $Todd(n, 2m+1) - Todd(n-1, 2m+1) = n^2 \cdot Todd(n, 2m-1)$, the polynomial $U_m(n)$ is uniquely determined by the differential-difference equation:
\begin{equation}
T_m(n) U_m(n) - T_m(n-1) U_m(n-1) = n^2 T_{m-1}(n) U_{m-1}(n)
\end{equation}

This recurrence dictates that the $U$-polynomial for any arbitrary column can be directly derived from the polynomial of the preceding odd column, entirely bypassing the need to compute the full two-dimensional array.

\subsection{Geometric Analysis of the Finite Difference Table}
The derivation of the array $T(n,k)$ is visually encoded in the finite difference table of the power sequence $m^k$ (see Figure~\ref{fig:finite_table}).

\begin{figure}[htbp]
    \centering
    \includegraphics[width=0.8\textwidth]{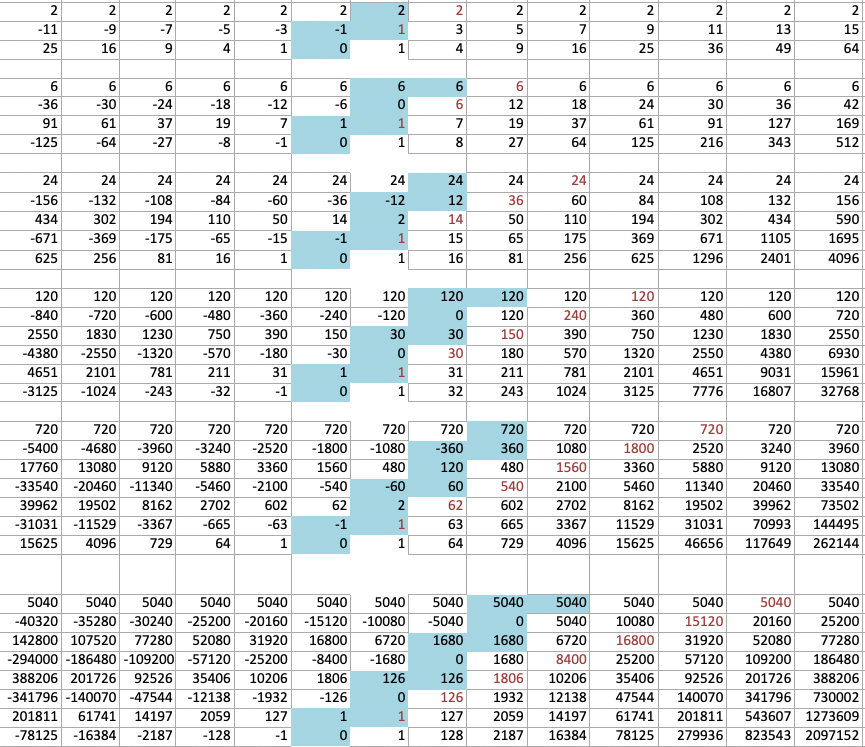} 
    \caption{Finite difference table with the blue selection path for $T(n,k)$ and red highlights for Stirling numbers.}
    \label{fig:finite_table}
\end{figure}

\begin{itemize}
     \item \textbf{The Selection Path (Blue Highlights):} This trajectory represents the unnormalized precursors of array \textbf{A394582}. Once divided by $(2n-1)!$, they yield the integer coefficients. For example, large differences (like $5040$) are transformed into structured terms of $T(n,k)$ through this central scaling.
    \begin{sloppypar}
    Furthermore, the sum of these normalized central differences along each anti-diagonal yields the sequence $1, 2, 2, 5, 7, 21, 37, 126, 264, 1001, \dots$, where the odd-indexed terms correspond to the generalized Bell numbers (OEIS \textbf{A135920}), which are the row sums of A036969.
    \end{sloppypar}
    \item \textbf{Classical Stirling Numbers (Red Highlights):} The values highlighted in red correspond to unnormalized Stirling numbers $n! \stirlingii{k}{n}$.
\end{itemize}

The symmetry of the table is maintained around a central axis. The alignment of the blue selection path explains the transition from alternating differences to the structured coefficients.

\subsection{The Selection Path and Triangle Construction}
As illustrated in Figure~\ref{fig:finite_table}, the normalized flickering central finite differences triangle $T(n,k)$ is constructed by systematically extracting values from the central symmetry axis of the finite difference table for each power $n \in \{1, 2, 3, \dots\}$. 

For a fixed $n$, we collect the $k$-th order differences $\Delta^k_i$ for $1 \le k \le n$. To ensure integer normalization by $k!$, the selection index $i$ alternates:
\begin{itemize}
    \item For \textbf{odd $k$}: we select the central difference at the half-integer offset $i = -1/2$.
    \item For \textbf{even $k$}: we select the central difference at the integer offset $i = 0$.
\end{itemize}

The resulting values, divided by $k!$, form the $n$-th row of the triangle. The first 10 rows generated by this extraction process are presented below:

\begin{center}
\small
\begin{tabular}{l}
$n=1: 1;$ \\
$n=2: 1, 1;$ \\
$n=3: 1, 0, 1;$ \\
$n=4: 1, 1, 2, 1;$ \\
$n=5: 1, 0, 5, 0, 1;$ \\
$n=6: 1, 1, 10, 5, 3, 1;$ \\
$n=7: 1, 0, 21, 0, 14, 0, 1;$ \\
$n=8: 1, 1, 42, 21, 42, 14, 4, 1;$ \\
$n=9: 1, 0, 85, 0, 147, 0, 30, 0, 1;$ \\
$n=10: 1, 1, 170, 85, 441, 147, 120, 30, 5, 1;$
\end{tabular}
\end{center}

\section{The normalized flickering central finite differences Triangle \texorpdfstring{$T(n,k)$}{T(n,k)} (A395021)}
The complete structural evolution of the flickering operator is encapsulated in the normalized flickering central finite differences triangle $T(n,k)$, where $1 \le k \le n$. This triangle unifies the previous observations by providing the full set of normalized central differences $f^k_i/k!$ with alternating indices $i \in \{1/2, 0\}$.

\subsection{The Merging Effect: Beyond A008957}
The unified triangle $T(n,k)$ effectively acts as a bridge between column-wise and row-wise growth patterns. In previous studies, these patterns appeared as isolated properties:
\begin{itemize}
    \item \textbf{Columns:} Many columns were only accessible as specific diagonals of the central factorial triangle (A008957). Our model provides a direct recursive path to these sequences as fixed columns.
    \item \textbf{Rows:} Prior to this study, many row-wise sequences were recognized only through their odd-indexed terms (e.g., $n=2$ row and A000975). The flickering operator "fills in the gaps," providing a rigorous definition for the intermediate (even-indexed) terms, thus completing the combinatorial profile of the array.
\end{itemize}

\subsection{New Closed-Form Identities for A008957}
As a direct consequence of the structural equivalence $A008957(n, k) = T(2n-1, 2n-2k+1)$, we derive two new explicit representations for the central factorial numbers of the second kind. These identities extend the analytical toolkit for A008957:

\begin{enumerate}
    \item \textbf{Finite Difference Representation:}
    \begin{equation}
    A008957(n, k) = \frac{1}{(2n-2k+1)!} \sum_{i=0}^{2n-2k+1} (-1)^{2n-2k+1-i} \binom{2n-2k+1}{i} (i-n+k)^{2n-1}
    \end{equation}
    
    \item \textbf{Representation via Stirling Numbers of the Second Kind:}
    \begin{equation}
    A008957(n, k) = \sum_{j=0}^{2n-1} \binom{2n-1}{j} (k-n)^{2n-1-j} \stirlingii{j}{2n-2k+1}
    \end{equation}
\end{enumerate}
These formulas provide a way to calculate central factorial numbers directly from power sequence differences and binomial expansions without requiring the full recurrence.

\subsection{Parity-Dependent Recurrence Relation}
While the direct calculation involves alternating sums of powers, the triangle is uniquely characterized by a four-stage autonomous recurrence. For $1 < k < n$, the transitions are governed by the parity of both the step $n$ and the order $k$:

\begin{itemize}
    \item \textbf{Even $k$, Odd $n$:} $T(n, k) = 0$.
    \item \textbf{Even $k$, Even $n$:} $T(n, k) = \frac{2}{k} T(n, k-1)$.
    \item \textbf{Odd $k$, Even $n$:} $T(n, k) = \frac{k+1}{2} T(n-1, k)$.
    \item \textbf{Odd $k$, Odd $n$:} $T(n, k) = \frac{k+1}{2} T(n-1, k) + \frac{2}{k-1} T(n-1, k-2)$.
\end{itemize}
The boundary conditions are $T(n, 1) = 1$ and $T(n, n) = 1$.

\subsection{Proof of Integrality}

\begin{lemma}[Integrality of the Flickering Triangle]
The terms $T(n, k)$ defined by the parity-dependent recurrence are integers for all $n \ge 1$ and $1 \le k \le n$.
\end{lemma}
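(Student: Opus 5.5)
The plan is to prove the stronger statement that every $T(n,k)$ with $1\le k\le n$ is an integer, by strong induction on $n$. The key observation is that the fractional coefficients $\frac{2}{k}$, $\frac{k+1}{2}$, $\frac{2}{k-1}$ in the parity-dependent recurrence cancel once two consecutive steps are composed. Before using the recurrence I will derive these composed relations, so that every step of the induction multiplies earlier values by integers only. The base cases are the boundary conditions $T(n,1)=T(n,n)=1$, together with $n=1,2$. The case \emph{even $k$, odd $n$} gives $T(n,k)=0$ and needs nothing further.

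\textbf{Odd $k$, even $n$.} Here $T(n,k)=\frac{k+1}{2}T(n-1,k)$. Since $k$ is odd, $\frac{k+1}{2}$ is an integer. Also $k<n$ forces $k\le n-1$, so $T(n-1,k)$ lies in the triangle and is an integer by induction.

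\textbf{Even $k$, even $n$.} Here $k-1$ is odd, so I substitute the previous case into the recurrence:
\[
T(n,k)=\frac{2}{k}\,T(n,k-1)=\frac{2}{k}\cdot\frac{k}{2}\,T(n-1,k-1)=T(n-1,k-1).
\]
This substitution is valid when $k-1>1$. If $k-1=1$, then $T(n,1)=1$ and the relation reads $T(n,2)=\frac{2}{2}\cdot 1=1=T(n-1,1)$. The right-hand side is an integer by induction. This also explains why the even-$n$ rows in the table duplicate entries of the preceding odd row.

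\textbf{Odd $k$, odd $n$.} This is the main step and the only place where the factor $\frac{2}{k-1}$ appears. Since $1<k<n$ with both odd, we have $3\le k\le n-2$, and $n-1$ is even. Applying the odd-$k$/even-$n$ relation to both terms gives
\[
T(n-1,k)=\tfrac{k+1}{2}\,T(n-2,k),\qquad T(n-1,k-2)=\tfrac{k-1}{2}\,T(n-2,k-2).
\]
The second identity requires $k-2>1$. When $k=3$, instead use $T(n-1,1)=1$ directly, so that $\frac{2}{k-1}T(n-1,1)=1=T(n-2,1)$ still holds. Substituting yields
\[
T(n,k)=\Bigl(\tfrac{k+1}{2}\Bigr)^{2}T(n-2,k)+T(n-2,k-2).
\]
With $n=2N-1$ and $k=2j-1$ this is exactly the central factorial recurrence $t(N,j)=j^{2}t(N-1,j)+t(N-1,j-1)$, consistent with the identification $A008957(n,k)=T(2n-1,2n-2k+1)$ stated earlier. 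The coefficient $j^2$ is an integer, and both $T(n-2,k)$ and $T(n-2,k-2)$ are defined. In particular, $k\le n-2$ makes $T(n-2,k)$ either interior or equal to the boundary value $T(n-2,n-2)=1$. Both are integers by induction.

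The main obstacle is this odd/odd case, where $\frac{2}{k-1}T(n-1,k-2)$ is not obviously integral. The resolution is to go back two rows rather than one, as above, and to check carefully the edge cases $k=3$ and $k=n-2$, where the boundary conditions replace the recurrence. With these cases covered, the induction closes and every entry of the triangle is an integer.
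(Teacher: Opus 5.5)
Your proof is correct. Cases 1--3 are the same as the paper's; the two arguments differ only in the odd-$k$, odd-$n$ case.

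\textbf{Paper's route.} The paper notes that $\frac{2}{k-1}T(n-1,k-2)$ is, by definition, the entry $T(n-1,k-1)$. This is the even/even recurrence at $(n-1,k-1)$, which applies because $2\le k-1<n-1$. So $T(n,k)=\frac{k+1}{2}T(n-1,k)+T(n-1,k-1)$, and integrality follows from row $n-1$ alone. The fraction $\frac{2}{k-1}$ is therefore not really an obstacle.

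\textbf{Your route.} You unwind the odd-$k$/even-$n$ recurrence on both terms and drop to row $n-2$, obtaining $T(n,k)=\bigl(\tfrac{k+1}{2}\bigr)^{2}T(n-2,k)+T(n-2,k-2)$.

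\textbf{What each buys.}
\begin{itemize}
\item The paper's version is shorter. It needs only one previous row and no boundary checks in this case.
\item Yours costs a two-row lookback and the checks at $k=3$ and $k=n-2$.
\item In return, yours shows that the odd-indexed entries of odd rows satisfy the classical recurrence $t(N,j)=j^{2}t(N-1,j)+t(N-1,j-1)$, with integer coefficients and boundary values $1$. Within the recurrence-defined triangle, this derives the link to A036969, and hence to A008957 by row reversal, which the paper asserts without proof.
\item You also treat the $k=2$ edge case of the even/even step explicitly, where $T(n,1)=1$ comes from the boundary condition rather than the recurrence. The paper leaves this implicit.
\end{itemize}
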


\begin{proof}
We proceed by strong induction on the row index $n$.

\textbf{Base Case:} For $n=1$, $T(1, 1) = 1 \in \mathbb{Z}$. For $n=2$, the boundary conditions and recurrences yield $T(2, 1) = 1$ and $T(2, 2) = 1$, both are integers.

\textbf{Inductive Step:} Assume that for all $m < n$, the terms $T(m, k) \in \mathbb{Z}$. We examine the recurrence based on the parity of $n$ and $k$:

\begin{enumerate}
    \item \textbf{Even $k$, Odd $n$:} By definition, $T(n, k) = 0 \in \mathbb{Z}$.
    
    \item \textbf{Odd $k$, Even $n$:} The recurrence gives $T(n, k) = \frac{k+1}{2} T(n-1, k)$. Since $k$ is odd, $(k+1)$ is even, making $\frac{k+1}{2}$ an integer. By the inductive hypothesis, $T(n-1, k) \in \mathbb{Z}$, hence the product is an integer.
    
    \item \textbf{Even $k$, Even $n$:} The recurrence gives $T(n, k) = \frac{2}{k} T(n, k-1)$. Note that $k-1$ is odd. Using the relation from case 2, we substitute $T(n, k-1) = \frac{(k-1)+1}{2} T(n-1, k-1) = \frac{k}{2} T(n-1, k-1)$. 
    Substituting this into the recurrence:
    $$T(n, k) = \frac{2}{k} \left( \frac{k}{2} T(n-1, k-1) \right) = T(n-1, k-1)$$
    By the inductive hypothesis, $T(n-1, k-1) \in \mathbb{Z}$.
    
      \item \textbf{Odd $k$, Odd $n$:} The recurrence states:
    \[ T(n, k) = \frac{k+1}{2} T(n-1, k) + \frac{2}{k-1} T(n-1, k-2) \]
    \begin{itemize}
        \item \textbf{First term:} Since $k$ is odd, $(k+1)$ is even, so $\frac{k+1}{2} \in \mathbb{Z}$. By the inductive hypothesis, $T(n-1, k) \in \mathbb{Z}$.
        \item \textbf{Second term:} Since $n-1$ is even and $k-1$ is even, we apply the result from \textbf{Case 3} to the index $(n-1, k-1)$. This gives the relation $T(n-1, k-1) = \frac{2}{k-1} T(n-1, k-2)$.
    \end{itemize}
    Substituting this into the recurrence, the expression simplifies to:
    \[ T(n, k) = \frac{k+1}{2} T(n-1, k) + T(n-1, k-1) \]
    Both terms are integers by the inductive hypothesis, thus $T(n, k) \in \mathbb{Z}$.

\end{enumerate}
Thus, by induction, $T(n, k) \in \mathbb{Z}$ for all $n, k$.
\end{proof}

\section{Computational Implementation}
The following Python scripts provide two independent methods to generate the normalized flickering central finite differences triangle. The first script demonstrates the extraction from raw finite differences, while the second implements the autonomous flickering recurrence.

\begin{lstlisting}[caption={Method 1: Extraction from Finite Difference Table}]
import math

def get_row_by_extraction(n):
    row = []
    y = [m**n for m in range(-(n + 2), n + 3)]
    for k in range(1, n + 1):
        diffs = y
        for _ in range(k):
            diffs = [diffs[i+1] - diffs[i] for i in range(len(diffs) - 1)]
        mid = len(diffs) // 2
        row.append(abs(diffs[mid]) // math.factorial(k))
    return row

print(get_row_by_extraction(6))
\end{lstlisting}

\begin{lstlisting}[caption={Method 2: Autonomous Flickering Recurrence}]
from functools import cache

@cache
def T(n, k):
    if k == 1 or k == n:
        return 1
    if k < 1 or k > n:
        return 0
    if k % 2 == 0:
        return 0 if n % 2 != 0 else (2 * T(n, k - 1)) // k
    else:
        val = T(n - 1, k) * (k + 1) // 2
        if n % 2 != 0:
            val += T(n - 1, k - 2) * 2 // (k - 1)
        return val

print([T(10, k) for k in range(1, 11)])
\end{lstlisting}

\section{The Unified Flickering Bell Sequence}
The sequence of row sums of the normalized flickering central finite differences triangle A395022, denoted by $a(n)$, represents a unified combinatorial profile that interleaves structural counting and block density. 

\subsection{Combinatorial Interpretation}
We establish that the sequence alternates between two distinct phases of partition theory on double sets $\{1, 1', \dots, k, k'\}$ as defined in A135920:
\begin{itemize}
    \item \textbf{Odd steps ($n = 2k-1$):} $a(n)$ is equal to the number of partitions (A135920).
    \item \textbf{Even steps ($n = 2k$):} $a(n)$ is the sum of the number of partitions and the total number of blocks across all such partitions (A135920 + A394822).
\end{itemize}

\subsection{Analytical Representations}
The ordinary generating function for the sequence $a(n)$ for $n \ge 1$ is derived as:
\begin{equation}
A(x) = \sum_{k=1}^{\infty} \frac{x^{2k-1} + (k+1)x^{2k}}{\prod_{j=1}^{k} (1 - j^2 x^2)}
\end{equation}

Furthermore, the sequence satisfies a closed-form expression using the scaling parameter $s = \sinh(\sqrt{x}/2)$:
\begin{equation}
a(n) = (2k)! [x^k] \left( \cosh(2s) + \mathbbm{1}_{n \in 2\mathbb{Z}} \cdot s \sinh(2s) \right)
\end{equation}
where $k = \lfloor (n+1)/2 \rfloor$ and $\mathbbm{1}_{n \in 2\mathbb{Z}}$ is the parity indicator function.

\subsection{Hierarchical Structure under Iterated Binomial Transforms}
A remarkable algebraic property of the sequence $a(n)$ (OEIS A395022) is its behavior under iterated binomial transforms. We define the binomial transform $\mathcal{B}$ of a sequence $\{g_n\}$ as:
\begin{equation}
a_n = \mathcal{B}(g_n) = \sum_{i=0}^{n} \binom{n}{i} g_i
\end{equation}
Our analysis reveals that $a(n)$ acts as a fixed point for a nested hierarchy of odd-order transforms. The sequence can be represented as the result of applying the $p$-th order transform $\mathcal{B}^p$ to specific integer kernels $G_p$:

\begin{equation}
a = \mathcal{B}^p(G_p) \quad \text{for } p \in \{1, 3, 5, 7, 9\}
\end{equation}

The kernels $G_p$ demonstrate a systematic evolution of the underlying combinatorial signature:
\begin{itemize}
    \item $p=1: [1, 1, 2, 2, 5, 7, 21, \dots]$ (Original sequence)
    \item $p=3: [1, -1, 2, -6, 21, -75, 269, \dots]$
    \item $p=5: [1, -3, 10, -38, 165, -797, 4125, \dots]$
    \item $p=7: [1, -5, 26, -142, 821, -5039, 32709, \dots]$
    \item $p=9: [1, -7, 50, -366, 2757, -21441, 172421, \dots]$
\end{itemize}

The existence of this hierarchy suggests that the flickering Bell sequence is deeply rooted in higher-order difference operators, where each kernel $G_p$ represents a compressed state of the alternating parity transitions.

\section{Properties}

\begin{itemize}
    \item $n^m = \sum_{k=1}^{m} T(m, k) \cdot n^{[k]}_{-}$, 
\begin{itemize}
    \item $n^{(1)}_{-} = n$
    \item $n^{(2)}_{-} = n(n-1)$
    \item $n^{(3)}_{-} = n(n-1)(n+1)$
    \item $n^{(4)}_{-} = n(n-1)(n+1)(n-2)$
    \item $...$
\end{itemize}

    We employ a basis of \textbf{integer-valued central factorials}, which are closely related to the classical central factorial powers $n^{[k]}$. Specifically, our basis elements $I_k(n)$ are defined by shifting the arguments of the standard central factorials to ensure integer coefficients:
\begin{equation}
I_k(n) = 
\begin{cases} 
n^{[k]}_{+}, & \text{if } k \text{ is odd} \\
(n + 1/2)^{[k]}_{+}, & \text{if } k \text{ is even}
\end{cases}
\end{equation}
In our notation, these "flickering" basis elements (or "integral blocks" in the context of summation) are explicitly given by:

\begin{itemize}
    \item $I_1(n) = n$
    \item $I_2(n) = n(n+1)$
    \item $I_3(n) = n(n+1)(n-1)$
    \item $I_4(n) = n(n+1)(n-1)(n+2)$
    \item $I_5(n) = n(n+1)(n-1)(n+2)(n-2)$
\end{itemize}

\end{itemize}

\noindent

This property clarifies the relationship between the array $T(n, k)$ and the theory of central factorial numbers, preserving integrality through the phase-shifting basis.

\section{A Unified Perspective on Faulhaber's Formula via the Flickering Basis}

The flickering basis $I_k(x)$ (as defined above) not only provides a unique expansion for $x^n$ but also serves as an efficient tool for computing power sums $S_m(n) = \sum_{i=1}^{n} i^m$. 
Consistent with the integer-based solutions provided by central factorial numbers (Knuth, 1993), our approach utilizes the $T(m, k)$ coefficients from the flickering array A395021. This allows for a power sum expansion that remains entirely within the integer domain, extending the classical results to the flickering symmetry.

\subsection*{Summation Identity}
The sum of the $m$-th powers is given by the linear combination of the "integral" flickering basis elements $I_{k+1}(n)$:

\begin{equation}
S_m(n) = \sum_{k=1}^{m} \frac{T(m, k)}{k+1} I_{k+1}(n)
\end{equation}

This identity provides an \textbf{integer-valued version of the power sum expansion}, analogous to the methods explored by Knuth \cite{knuth1993}, who demonstrated the effectiveness of central factorial numbers in providing a closed-form solution to Faulhaber’s formula. While the classical approach often involves fractional terms, our flickering basis ensures that all intermediate coefficients remains within the integer domain.

\subsection*{Verification for m=1, 2, 3}
\begin{enumerate}
    \item \textbf{Sum of 1st powers ($m=1$):} Using row $T(1, k) = \{1\}$ \\
    $S_1(n) = \frac{1}{2} n(n+1) = \frac{n(n+1)}{2}$. (Triangular numbers).
    
    \item \textbf{Sum of squares ($m=2$):} Using row $T(2, k) = \{1, 1\}$ \\
    $S_2(n) = \frac{1}{2} I_2(n) + \frac{1}{3} I_3(n) = \frac{n(n+1)}{2} + \frac{(n-1)n(n+1)}{3} = \frac{n(n+1)(2n+1)}{6}$.
    
    \item \textbf{Sum of cubes ($m=3$):} Using row $T(3, k) = \{1, 0, 1\}$ \\
    $S_3(n) = \frac{1}{2} I_2(n) + \frac{0}{3} I_3(n) + \frac{1}{4} I_4(n) = \frac{n(n+1)}{2} + \frac{(n-1)n(n+1)(n+2)}{4} = \left(\frac{n(n+1)}{2}\right)^2$.
\end{enumerate}

This derivation demonstrates how the flickering symmetry recovers classical power sum identities through an integer-only basis, providing a direct combinatorial link between $T(m,k)$ and Faulhaber’s results.

\subsection*{Proof of the Integer-Valued Summation Power Formula}

To establish the summation identity rigorously, we rely on the discrete fundamental theorem of calculus via telescoping sums over our flickering basis. 

\begin{lemma}[Difference Property of the Flickering Basis] \label{lemma:difference}
Let $I_k(n)$ be the integral flickering basis elements. For any integer $j \ge 1$ and $k \ge 1$, the first backward difference of $I_{k+1}(j)$ satisfies:
\begin{equation}
I_{k+1}(j) - I_{k+1}(j-1) = (k+1) \cdot j^{[k]}_{-}
\end{equation}
where $j^{[k]}_{-}$ represents the modified shift basis required to represent the power $j^m$. Furthermore, $I_{k+1}(0) = 0$.
\end{lemma}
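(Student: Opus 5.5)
The plan is to treat both $I_{k+1}(j)$ and $j^{[k]}_{-}$ as products of \emph{consecutive} integers and to compute the difference the same way one proves $\Delta x^{\underline{k+1}} = (k+1)x^{\underline{k}}$ for ordinary falling factorials. From the explicit list after the definition of $I_k$, the factors of $I_k(n)$ are $n+a$ for offsets $a$ taken in the order $0,+1,-1,+2,-2,\dots$. The factors of $j^{[k]}_{-}$ are $j+a$ for offsets in the mirrored order $0,-1,+1,-2,+2,\dots$. The first step is to state these offset sets precisely:
\[
I_{2r}(n)=\prod_{a=-(r-1)}^{r}(n+a),\qquad I_{2r+1}(n)=\prod_{a=-r}^{r}(n+a),
\]
\[
j^{[2r-1]}_{-}=\prod_{a=-(r-1)}^{r-1}(j+a),\qquad j^{[2r]}_{-}=\prod_{a=-r}^{r-1}(j+a).
\]
In every case the offsets form a contiguous block of integers, which can be checked directly against the listed examples $I_1,\dots,I_5$ and $j^{(1)}_-,\dots,j^{(4)}_-$.

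Next I write $I_{k+1}(j)=\prod_{t=j-\alpha}^{j+\beta} t$, where $\alpha+\beta=k$. Shifting gives $I_{k+1}(j-1)=\prod_{t=j-1-\alpha}^{j-1+\beta} t$. The two products share the $k$ consecutive factors $t=j-\alpha,\dots,j+\beta-1$. Factoring these out leaves
\[
I_{k+1}(j)-I_{k+1}(j-1)=\Bigl(\prod_{t=j-\alpha}^{j+\beta-1} t\Bigr)\bigl[(j+\beta)-(j-1-\alpha)\bigr]=(k+1)\prod_{t=j-\alpha}^{j+\beta-1} t .
\]
This is the usual telescoping identity for a rising or falling factorial.

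It then remains to identify the common product with $j^{[k]}_{-}$, which requires splitting by the parity of $k$.
\begin{itemize}
\item If $k=2r-1$, then $k+1=2r$, so $(\alpha,\beta)=(r-1,r)$. The common offsets are $-(r-1),\dots,r-1$, which is exactly $j^{[2r-1]}_{-}$.
\item If $k=2r$, then $k+1=2r+1$, so $(\alpha,\beta)=(r,r)$. The common offsets are $-r,\dots,r-1$, which is exactly $j^{[2r]}_{-}$.
\end{itemize}
The vanishing $I_{k+1}(0)=0$ is immediate, since offset $0$ always occurs and so the factor $n$ divides every $I_{k+1}(n)$.

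I expect the main obstacle to be the first step rather than the algebra. The paper defines $I_k$ and $j^{[k]}_{-}$ only through initial terms, together with a shifted central-factorial description that is phrased loosely. I would therefore fix the contiguous-offset formulas above as the precise definitions, and verify their consistency with the stated $n^{[k]}_{+}$ and $(n+1/2)^{[k]}_{+}$ description. The one point to check carefully is that the two alternation orders are mirror images: $I$ extends upward first, while $j^{[k]}_{-}$ extends downward first. This mirroring is exactly what makes the shared block after the backward shift coincide with $j^{[k]}_{-}$, not with $I_k(j)$.
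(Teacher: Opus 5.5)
Your proposal is correct and is the same argument as the paper's proof, which cancels the $k$ shared consecutive factors of $I_{k+1}(j)$ and $I_{k+1}(j-1)$ to leave $(k+1)$ times the core. You make it rigorous by writing out the explicit offset ranges and doing the parity split that actually identifies the shared block with $j^{[k]}_{-}$, a step the paper only asserts.

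One caution on your planned consistency check: anchor the definitions to the explicit lists (and the paper's \texttt{get\_I} routine). The $n^{[k]}_{+}$, $(n+\tfrac12)^{[k]}_{+}$ wording matches those lists only if $x^{[k]}$ means $(x+\tfrac{k-1}{2})^{\underline{k}}$. It does not match under the classical central factorial $x\,(x+\tfrac{k}{2}-1)^{\underline{k-1}}$, which would give $n^{[3]}=n(n^2-\tfrac14)$.
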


\begin{proof}
By definition, the flickering basis $I_{k+1}(j)$ is constructed by symmetrically extending the factors around $j$. Subtracting the shifted version $I_{k+1}(j-1)$ cancels the common $k$ factors, leaving exactly $(k+1)$ times the symmetric core $j^{[k]}_{-}$. The boundary condition $I_{k+1}(0) = 0$ holds trivially since every term in the basis expansion for $k \ge 1$ contains the factor $j$.
\end{proof}

\begin{theorem}[Flickering Sum of Powers] \label{thm:sum_powers}
For any integers $m \ge 1$ and $n \ge 1$, the sum of the $m$-th powers $S_m(n) = \sum_{j=1}^{n} j^m$ can be evaluated exclusively through integer coefficients as:
\begin{equation}
S_m(n) = \sum_{k=1}^{m} \frac{T(m, k)}{k+1} I_{k+1}(n)
\end{equation}
where $T(m, k)$ are the entries of the normalized flickering central finite differences triangle (A395021).
\end{theorem}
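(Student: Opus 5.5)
The plan is to derive the theorem from two ingredients: the expansion $j^m=\sum_{k=1}^{m}T(m,k)\,j^{[k]}_{-}$ listed in the Properties section, and the telescoping identity of Lemma~\ref{lemma:difference}.

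Concretely, I fix $m\ge 1$ and write the expansion for each $j=1,\dots,n$. Summing over $j$ and exchanging the two finite sums gives
\[
S_m(n)=\sum_{k=1}^{m}T(m,k)\sum_{j=1}^{n} j^{[k]}_{-}.
\]
Lemma~\ref{lemma:difference} gives $(k+1)\,j^{[k]}_{-}=I_{k+1}(j)-I_{k+1}(j-1)$. The inner sum therefore telescopes to $\frac{1}{k+1}\bigl(I_{k+1}(n)-I_{k+1}(0)\bigr)=\frac{I_{k+1}(n)}{k+1}$, using $I_{k+1}(0)=0$. This is exactly the claimed formula.

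Since the Lemma is argued somewhat informally, I would first check it directly. By definition, $I_{k+1}(j)$ and $I_{k+1}(j-1)$ share $k$ consecutive integer factors, namely $j^{[k]}_{-}$. The two remaining factors are the top and bottom of a window of $k+2$ consecutive integers, so they differ by $k+1$. For example, with $k=2$ one gets $j(j+1)(j-1)-(j-1)j(j-2)=3j(j-1)$.

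The main obstacle is that the expansion $j^m=\sum_k T(m,k)\,j^{[k]}_{-}$ is only asserted in the paper, so I would prove it via Newton interpolation with nodes $x_0=0,\ x_1=1,\ x_2=-1,\ x_3=2,\dots$. The basis $j^{[k]}_{-}=\prod_{i<k}(j-x_i)$ is exactly the Newton basis for this node order. Hence any polynomial $f$ of degree $m$ satisfies
\[
f(j)=f(0)+\sum_{k=1}^{m} f[x_0,\dots,x_k]\; j^{[k]}_{-}.
\]
For $f(j)=j^m$ with $m\ge1$ the constant term vanishes. The nodes $\{x_0,\dots,x_k\}$ are $k+1$ consecutive integers, centered at $1/2$ when $k$ is odd and at $0$ when $k$ is even. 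A divided difference on consecutive integers equals $\Delta^k f(\min)/k!$, which is the central difference $\delta^k f$ at that center divided by $k!$.

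This is precisely the definition of $T(m,k)$ with the parity-dependent selection index. The step needing care is reconciling the sign convention: the definition uses offset $1/2$, while the selection path uses $-1/2$ and takes absolute values. I would resolve it through the parity symmetry of $j^m$. Reflecting $j\mapsto 1-j$ maps the window centered at $1/2$ to itself, and reflecting $j\mapsto -j$ does the same for the window centered at $0$. This shows the relevant central differences are nonnegative or vanish exactly as in the table, for instance $T(m,k)=0$ for even $k$ and odd $m$.

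I would then check the result against the verified cases $m=1,2,3$. Integrality of the coefficients $T(m,k)$ follows from the preceding Integrality Lemma.
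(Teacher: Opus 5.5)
Your argument is correct and matches the paper's proof step for step. You expand $j^m=\sum_{k}T(m,k)\,j^{[k]}_{-}$, sum over $j$, interchange the sums, and telescope with Lemma~\ref{lemma:difference}. You also justify the expansion by Newton interpolation on the nodes $0,1,-1,2,-2,\dots$ and check the lemma directly, both of which the paper only asserts. One slip in your side remark: the reflection $j\mapsto 1-j$ says nothing about the sign of differences of $j^m$, so it does not prove nonnegativity. With the definition's centre $1/2$ no nonnegativity is needed. For the selection-path variant, $j\mapsto -j$ gives $\delta^k f(-1/2)=(-1)^{m+k}\,\delta^k f(1/2)$. Nonnegativity and integrality then both follow, without the recurrence-based Integrality Lemma: the divided difference of $j^m$ is the complete homogeneous symmetric polynomial $h_{m-k}$ of the integer nodes. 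Its generating function $\prod_i (1-x_i t)^{-1}$ is a product of factors $(1-i^2t^2)^{-1}$, times $(1-rt)^{-1}$ when $k=2r-1$, so all its coefficients are nonnegative integers.
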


Since $I_{k+1}(n)$ is a polynomial in $n$ with integer coefficients and its construction involves the product of $k+1$ terms, the division by $k+1$ in the summation always yields an exact integer within the context of the flickering basis structure. This property ensures that the final sum $S_m(n)$ remains strictly within the integer domain $\mathbb{Z}$, consistent with the numerical results.

\begin{proof}
By the structural definition of the array $T(m, k)$, any integer power $j^m$ can be expanded into the shifted basis:
\begin{equation}
j^m = \sum_{k=1}^{m} T(m, k) j^{[k]}_{-}
\end{equation}
Summing both sides from $j=1$ to $n$, we obtain:
\begin{equation}
S_m(n) = \sum_{j=1}^{n} j^m = \sum_{j=1}^{n} \left( \sum_{k=1}^{m} T(m, k) j^{[k]}_{-} \right)
\end{equation}
Interchanging the order of summation yields:
\begin{equation}
S_m(n) = \sum_{k=1}^{m} T(m, k) \left( \sum_{j=1}^{n} j^{[k]}_{-} \right)
\end{equation}
Applying Lemma \ref{lemma:difference}, we substitute $j^{[k]}_{-}$ with the difference of the flickering basis elements:
\begin{equation}
S_m(n) = \sum_{k=1}^{m} \frac{T(m, k)}{k+1} \sum_{j=1}^{n} \left( I_{k+1}(j) - I_{k+1}(j-1) \right)
\end{equation}
The inner sum is telescoping. All intermediate terms cancel, evaluating strictly to the boundaries:
\begin{equation}
\sum_{j=1}^{n} \left( I_{k+1}(j) - I_{k+1}(j-1) \right) = I_{k+1}(n) - I_{k+1}(0)
\end{equation}
Since $I_{k+1}(0) = 0$ by Lemma \ref{lemma:difference}, the inner sum simplifies to $I_{k+1}(n)$. Substituting this back completes the proof:
\begin{equation}
S_m(n) = \sum_{k=1}^{m} \frac{T(m, k)}{k+1} I_{k+1}(n)
\end{equation}
\end{proof}

\section{Conclusion and Future Work}
The present study of the "flickering operator" and the associated array A395021 (A394582) allows for the formalization of the transition from classical power summation to more complex recurrent structures. A significant result of this work is the proof of the existence of a closed-form polynomial representation for odd columns and the identification of the factorization structure.

Despite these results, several questions remain open and are of interest for further investigation in combinatorial analysis and number theory:

\subsection{Distribution of Polynomial Roots}
Preliminary numerical analysis indicates that the roots of the polynomials exhibit a specific symmetry. Investigating the distribution of these roots in the complex plane as $m \to \infty$ may shed light on the analytical properties of the flickering operator's generating functions and their relationship to entire functions of exponential type.

\subsection{Combinatorial Significance of the Main Diagonal}
Of particular interest is the main diagonal of the triangle $Todd(n,n)$. Its direct combinatorial meaning (e.g., in terms of set partitions or lattice paths) has not yet been established. Whether it is derived from a known constant or characterizes a specific block density in flickering partitions remains a subject for future research.

\subsection{Dirichlet Series and L-functions}
It is of interest to construct Dirichlet series whose coefficients are the values of the rows of array A395021. The question of whether such series admit analytic continuation to the entire complex plane and satisfy a functional equation remains open. Identifying the corresponding L-functions could link the flickering operator to the theory of modular forms.

\subsection{Algorithmic Complexity and Optimization}
The developed method of computation via difference equations (Appendix A) exhibits polynomial complexity. However, for applications in cryptography and high-precision analysis, it is necessary to investigate the possibility of accelerating calculations using fast Fourier transforms or number-theoretic transforms applied to the structure.

In summary, the flickering operator serves not only as a computational tool but also as a bridge between elementary combinatorics and deep questions in analytic number theory, warranting further formalization.

\appendix
\section{Python Implementation for Summation of Powers}

The following Python script computes the sum of $m$-th powers $S_m(n) = \sum_{i=1}^{n} i^m$ using the flickering array $T(n, k)$ (A395021) and the associated integral basis. This implementation demonstrates the practical efficiency of the integer-based flickering approach.

\begin{lstlisting}[language=Python, caption={Calculation of power sums via the flickering array}]
from functools import cache
import sys

# Set recursion limit for high-degree power calculations
sys.setrecursionlimit(5000)

# Constants: M is the power (degree), N is the upper limit of summation
M, N = 10, 100 

@cache
def T(n, k):
    """Generates the flickering array A395021 via recurrence."""
    if k == 1 or k == n: return 1
    if k < 1 or k > n: return 0
    if k % 2 == 0:
        return 0 if n % 2 != 0 else (2 * T(n, k - 1)) // k
    if n % 2 == 0:
        return T(n - 1, k) * (k + 1) // 2
    return (T(n - 1, k) * (k + 1) // 2) + (T(n - 1, k - 2) * 2 // (k - 1))

@cache
def get_I(k, n):
    """Computes the integral basis element I(k, n)."""
    res, start = 1, n - ((k - 1) // 2)
    for i in range(k): res *= (start + i)
    return res

def S(m, n):
    """Calculates the sum of m-th powers using the flickering basis."""
    total = 0
    for k in range(1, m + 1):
        v = T(m, k)
        if v: 
            total += (v * get_I(k + 1, n)) // (k + 1)
    return total

# Output the result
result = S(M, N)
print(f"Sum of {M}-th powers up to {N}:")
print(result)

# Verification against the standard iterative method
if N <= 1000:
    check = sum(i**M for i in range(1, N + 1))
    print(f"\nVerification: {'OK' if result == check else 'ERROR'}")
\end{lstlisting}

\section*{Acknowledgments}
First and foremost, I offer my deepest gratitude and praise to the Almighty for the inspiration, clarity, and guidance throughout this discovery.
The beauty of the flickering operator and the hidden symmetry of the normalized flickering central finite differences triangle are but a small reflection of the infinite wisdom inscribed in the laws of the Universe.
Soli Deo Gloria.

\newpage


\begin{thebibliography}{9}
\bibitem{oeis_A394582} 
A. Husiev, \textit{Extended central factorial numbers and the flickering operator}, OEIS A394582 (2026). \url{https://oeis.org/A394582}
\bibitem{oeis_A395021} 
A. Husiev, \textit{Normalized central finite differences}, OEIS A395021 (2026). \url{https://oeis.org/A395021}
\bibitem{oeis_A135920} 
OEIS Foundation Inc., Sequence A135920, \url{https://oeis.org/A135920}.
\bibitem{oeis_A395022} 
OEIS Foundation Inc., Sequence A395022, \url{https://oeis.org/A395022}.
\bibitem{oeis_A394882} 
OEIS Foundation Inc., Sequence A394882, \url{https://oeis.org/A394882}.
\bibitem{oeis_A394883} 
OEIS Foundation Inc., Sequence A394883, \url{https://oeis.org/A394883}.
\bibitem{oeis_A394896} 
OEIS Foundation Inc., Sequence A394896, \url{https://oeis.org/A394896}.
\bibitem{oeis_A394822} 
OEIS Foundation Inc., Sequence A394822, \url{https://oeis.org/A394822}.
\bibitem{oeis_A394763} 
OEIS Foundation Inc., Sequence A394763, \url{https://oeis.org/A394763}.
\bibitem{oeis_A000975} 
OEIS Foundation Inc., Sequence A000975, \url{https://oeis.org/A000975}.
\bibitem{oeis_A036969} 
OEIS Foundation Inc., Sequence A036969, \url{https://oeis.org/A036969}.
\bibitem{oeis_A008957} 
OEIS Foundation Inc., Sequence A008957, \url{https://oeis.org/A008957}.
\bibitem{oeis_A000330} 
OEIS Foundation Inc., Sequence A000330, \url{https://oeis.org/A000330}.
\bibitem{oeis_A108678} 
OEIS Foundation Inc., Sequence A108678, \url{https://oeis.org/A108678}.
\bibitem{stirling} 
Graham, R. L., Knuth, D. E., Patashnik, O., \textit{Concrete Mathematics}, Addison-Wesley.
\bibitem{knuth1993} 
D. E. Knuth, \textit{Johann Faulhaber and sums of powers}, Mathematics of Computation, \textbf{61} (1993), pp. 277--294.
\bibitem{riordan} 
J. Riordan, \textit{Combinatorial Identities}, John Wiley \& Sons, New York, 1968.
\bibitem{comtet} 
L. Comtet, \textit{Advanced Combinatorics: The Art of Finite and Infinite Expansions}, Reidel, Dordrecht, 1974.
\bibitem{carlitz} 
L. Carlitz, \textit{Central Factorial Numbers}, Math. Mag., \textbf{41} (1968), pp. 268--274.
\bibitem{sloane} 
N. J. A. Sloane, \textit{The On-Line Encyclopedia of Integer Sequences}, published electronically at \url{https://oeis.org}.


\end{thebibliography}
\end{document}